\documentclass[english]{article}
\usepackage{amsthm} 
\usepackage{amsmath}
\usepackage{amssymb} 
\usepackage{quiver}
\usepackage{xspace}
\usepackage{stmaryrd}
\usepackage{thmtools} 
\usepackage{mathtools} 
\usepackage{float}
\usepackage{caption}
\usepackage{subcaption}

\newtheorem{definition}{Definition}
\newtheorem{theorem}{Theorem}

\newtheorem{lemma}{Lemma}
\newtheorem{proposition}{Proposition}

\setcounter{tocdepth}{1}  


\newcount\foreachcount

\makeatletter
\let\ea\expandafter
\def\foreachLetter#1#2#3{\foreachcount=#1
  \ea\loop\ea\ea\ea#3\@Alph\foreachcount
  \advance\foreachcount by 1
  \ifnum\foreachcount<#2\repeat}
\def\definecal#1{\ea\gdef\csname c#1\endcsname{\ensuremath{\mathcal{#1}}\xspace}}
\foreachLetter{1}{27}{\definecal}

\makeatletter
\let\ea\expandafter
\def\foreachLetter#1#2#3{\foreachcount=#1
  \ea\loop\ea\ea\ea#3\@Alph\foreachcount
  \advance\foreachcount by 1
  \ifnum\foreachcount<#2\repeat}
\def\definecal#1{\ea\gdef\csname b#1\endcsname{\ensuremath{\mathbf{#1}}\xspace}}
\foreachLetter{1}{27}{\definecal}



\newcommand{\R}{\mathbb{R}}

\newcommand{\Vect}{\mathbf{Vect}_{\mathbb{R}}}
\newcommand{\Smooth}{\mathbf{Smooth}}

\newcommand{\Mat}{\mathbf{Mat}}
\newcommand{\Span}{\mathbf{Span}}
\newcommand{\FinSet}{\mathbf{FinSet}}

\newcommand{\CoKl}{\mathsf{CoKl}}
\newcommand{\Para}{\mathbf{Para}}
\newcommand{\Lens}{\mathbf{Lens}}

\newcommand{\Cat}{\mathbf{Cat}}

\newcommand{\comp}{\fatsemi}

\newcommand{\internalImpll}[3]{[#2, #3]_{#1}}

\newcommand{\internal}[3]{\internalImpll{#1}{#2}{#3}}

\newcommand{\adjobj}{\R^{n \times n}}
\newcommand{\plc}{\Para_\odot(\Lens_A(\CoKl(\adjobj \times -)))}
\newcommand{\pc}{\Para_\odot(\CoKl(\adjobj \times -))}
\newcommand{\src}{\textrm{src}}
\newcommand{\tgt}{\textrm{tgt}}

\newcommand{\GCNN}{\mathbf{GCNN}}
\newcommand{\Adj}{\mathrm{Adj}}

\newcommand{\diag}{\mathrm{diag}}
\newcommand{\ReLU}{\mathrm{ReLU}}

\title{Graph Convolutional Neural Networks as Parametric CoKleisli morphisms}
\author{Bruno Gavranovi\'c and Mattia Villani}

\begin{document}
\maketitle
\begin{abstract}
  We define the bicategory of Graph Convolutional Neural Networks $\GCNN_n$ for
  an arbitrary graph with $n$ nodes.
  We show it can be factored through the already existing categorical
  constructions for deep learning called $\Para$ and $\Lens$ with the base
  category set to the CoKleisli category of the product comonad.
  We prove that there exists an injective-on-objects, faithful 2-functor $\GCNN_n \to \pc$.
  We show that this construction allows us to treat the adjacency matrix of a
  GCNN as a global parameter instead of a a local, layer-wise one.
  This gives us a high-level categorical characterisation of a particular kind of inductive bias GCNNs possess.
  Lastly, we hypothesize about possible generalisations of GCNNs to general message-passing graph neural networks, connections to equivariant learning, and the (lack of) functoriality of activation functions.
\end{abstract}

	\section{Introduction}

Neural networks have recently been formalised in category theory using the abstraction of
parametric lenses \cite{GradientBasedLearning}.
This is a formalism that uses the categorical constructions of $\Para$ and $\Lens$ to compositionally model neural network weights and the backpropagation process, respectively.
By composing $\Para$ and $\Lens$ together for the setting of a category with sufficient structure the authors how general neural networks can be modelled.

This formalism is sufficiently powerful to encompass several families of neural network architectures: feedforward networks, recurrent networks, convolutional neural networks, residual networks, graph neural networks, generative adversarial networks, and many more.
These families of architectures differ substantially in character: they require as input different types of underlying datasets, they have different expressive power, and the inductive biases that characterise them vary vastly.
Numerous architectures have given birth to their sub-fields in each of whose results are being compared against different benchmark datasets and benchmark architectures. 

However, there is currently no way to see these high-level differences in
category theory.
In the current categorical formalism of \cite{GradientBasedLearning}, these different kinds of networks are still just parametric lenses.
Indeed, the abstraction lacks the sufficient resolution to distinguish architectures, while these yield remarkably different results when applied to their respective learning problems (recurrent nets for time series, convolutional nets in image recognition, etc.).
This creates difficulties on two fronts. When it comes to implementation in code, this framework only answers questions about composing networks
in the abstract form, and does not make it easier to \emph{implement} particular
families of architectures.
For instance, when it comes to Graph Convolutional Neural Networks which involve
the adjacency matrix of a particular graph, it's up to the programmer to
specify at which layers the graph convolution must be performed.
In the context of recurrent neural networks, the user has to make sure to
correctly unroll the network, and perform stateful computation in time.
Of course, many frameworks provide out-of-the-box solutions, but there is no
formal way to verify whether they satisfy the constraints of a particular
architecture. Often there are numerous variations to architecture one can
perform, and no principled approach to make high-level architectural design decisions.
In other words, it is not clear how to formally define the \emph{type of a
  neural network}: the different types of architectures have yet to be formalised in terms of type theory or category theory.

\textbf{Contributions.} In this paper, we make first steps in expressing these architectural
differences in terms of category theory. We explicitly focus on Graph
Convolutional Neural Networks and, given a graph $G$, we formally define a
bicategory of graph convolutional neural networks $\GCNN_G$.
We show this bicategory can be seen as arising out of a composition of smaller
semantic components previously introduced in categorical deep learning
literature: the framework of $\Para$ and $\Lens$.
We show that one can correctly embed $\GCNN_n$ in the aforementioned framework, when instantiated on the base category $\CoKl(A \times -)$.
This gives a birds-eye view of a particular kind of inductive bias GCNNs posses
- that of a globally available parameter - the adjacency matrix of the
underlying graph they're trained on.

\textbf{Acknolwedgements.} We thank Matteo Capucci, Petar Veli{\v c}kovi\'c and Andrew Dudzik for inspiring conversations.


	\section{Why Graphs?}

We proceed to describe how classical feedforward neural networks work, and then
the intuition behind generalising them to graph-based ones.

\subsection{Neural Networks}
A neural network layer in its simplest form involves a function of type
\begin{equation}
  \label{eq:nn_layer}
  f : P \times X \to Y
\end{equation}

This function takes in a \emph{parameter} (for instance, a matrix $W :
\R^{n \times m}$ of \emph{weights}), an input (for instance, a vector $x : \R^n$ of
\emph{features}), and computes an output value, usually passed on to the next
layer.
A layer is usually implemented as a function $(W, x) \mapsto \sigma(x^T W)$ which
first performs the linear operation of matrix multiplication and then applies a
non-linear activation function $\sigma : \R^m \to \R^m$.\footnote{This
  activation function is often actually a \emph{parallel product} of a
  activation functions. That is, we usually think of it as $\sigma^n : \R^n \to
  \R^n$, where $\sigma : \R \to \R$ is a non-linear activation such as sigmoid,
  or ReLU. The only exception known to the author to this case is the softmax
  function, defined in \cite{GradientBasedLearning}[Example 3.6]. Nevertheless, we can decompose this function with a parallel step and an aggregation step.}

Even though $f$ looks like a standard two-argument function, it's important to
note that the parameter $P$ (the weights) is an extra, ``hidden'' input that
belongs to the local data of the neural network itself, not the outside ports. This
means that each layer of a neural network can potentially have a different
parameter (weight) space.

If we are in possession of two layers of neural network, i.e. functions $f : P
\times X \to Y$ and $g : Q \times Y \to Z$, then their composition is defined to
be the following parameterised map
\begin{align*}
  (Q \times P) \times X &\to Y\\
  ((q, p), x) &\mapsto g(q, f(p, x))
\end{align*}

As noted in \cite{GradientBasedLearning}, this is amenable to formulation with
the $\Para$ construction, defined for any base monoidal category.
We refer the interested reader to the aforementioned paper for full details on
its construction, and here we only remark on the fact that $\Para$ comes
equipped with a graphical language, allowing us to talk about horizontal
composition of morphisms (composition of layers of a neural network) and
vertical reparameterisation of morphisms (reparameterisation of neural network
weights).
Figure \ref{fig:architecture_agnostic} shows an example of this graphical
language in action, depicting a composition of three morphisms in
$\Para(\cC)$. The composite map is parameterised by $P_1 \times P_2 \times
P_3$, the product of parameter spaces of its constituents.

\begin{figure}[H]
  \centering
  \includegraphics[width=.7\textwidth]{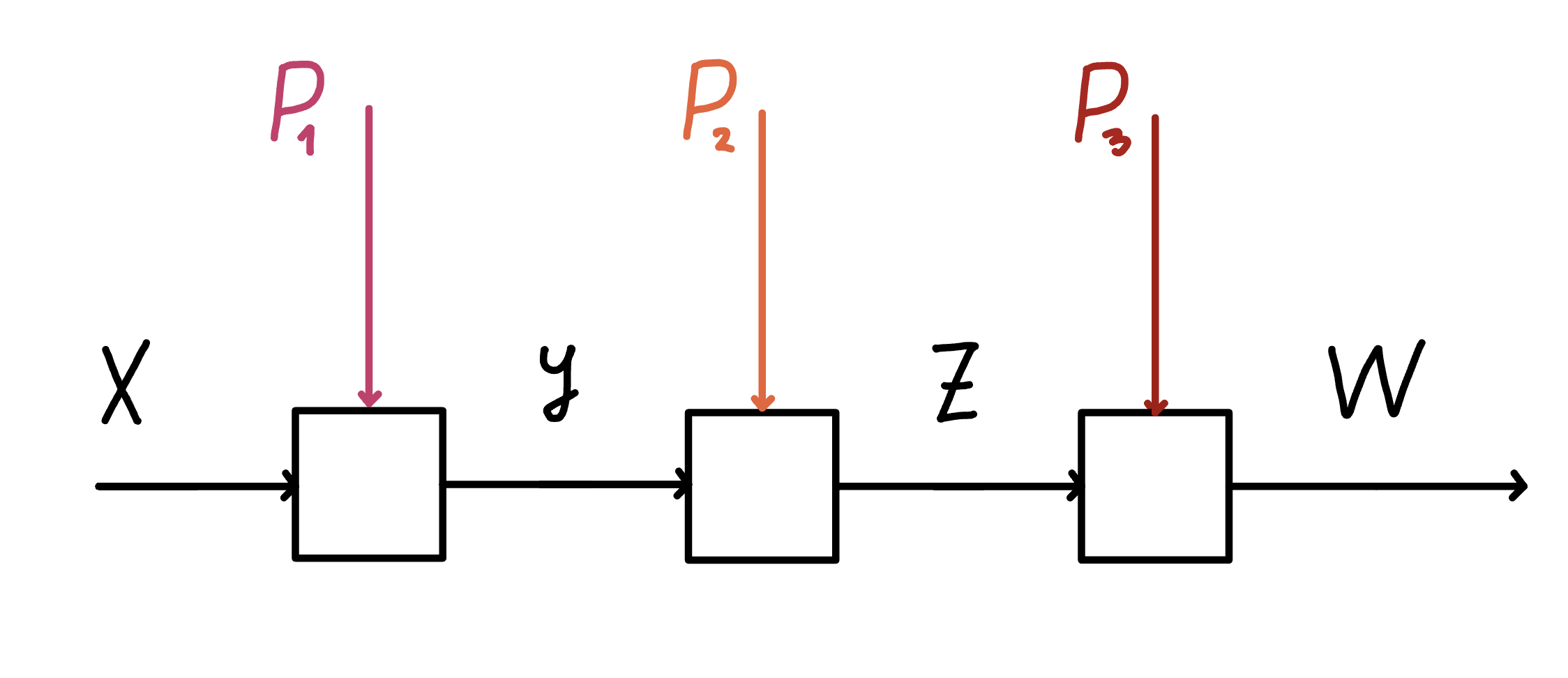}
  \caption{Graphical representation of a composition of three morphisms in $\Para(\cC)$. }
  \label{fig:architecture_agnostic}
\end{figure}

\subsection{Graph Neural Networks: Adding an Extra Dimension}

Neural networks described above model functions which process \emph{one} vector of features in a sequence of layers.
For instance, in the context of application of deep learning to house price
prediction, this input vector $x : \R^n$ could represent $n$ aspects of a real
estate property such as land area, number of bedrooms, years since last
renovation, and so on. The output value could be $\R$, which we can interpret as
an estimate of a price, or likelihood of sale.

There are two ways to think about generalising this story to a graph-based one.

\begin{itemize} 
\item \textbf{Location.} The value of a house is not
  fully determined by its intrinsic factors (such
  as number of bedrooms or its EPC rating); it also
  depends on extrinsic factors, such as \emph{features of other houses}
  around it. An expensive house in a neighbourhood of broken-down houses is
  worth less than the same house in an up-and-coming neighbourhood.
  More broadly, we expect houses nearby to be affect by similar exogenous factors;
  for instance, market dynamics forcing a no-arbitrage condition, which in this case
  resembles a continuity condition on location.

  Our feedforward network was fully determining the price of a property only by
  its intrinsic factors. In other words, we index by the trivial graph
  with one node and one edge, and there are no neighbourhoods or other
  next-door houses to reference. But we can index by arbitrary graphs
  (that, for instance, encode property adjacency information) and start
  determining attributes by looking at both a node and its neighbourhood.
  This means that we will be processing many input vectors, and use the
  connectivity of the graph to pass messages from nodes to their neighbours.

\item \textbf{Connecting the dots.} Another viewpoint that does not involve a graph,
  but still sheds light on the architectural distinction between a graph neural network
  and a vanilla feed-forward neural network, comes from looking at how batch computation
  is performed in the deep learning literature.
  To exploit the fast parallel nature of GPUs, input vectors are often
  processed many at a time, in parallel. This is called \emph{batching} and it
  adds an extra mode or dimension to the input of our neural network system. For instance, if the
  number of vectors in a batch is $b$, then our input gains another dimension
  and becomes a \emph{matrix} of type $X : \R^{b \times n}$, interpreted as a
  stacking $n$ vectors. To process this matrix, we add another dimension to our
  neural network: we make $b$-number of copies of the same network and process
  each $n$-sized vectors \emph{in parallel}, with the same parameter. The
  results in $b$ estimates of prices of these properties, encoded as a vector of
  type $\R^b$. This corresponds to indxexing by a \emph{discrete graph} with $n$
  nodes, where each node is one training example.
  But processing datapoints independently of each other in a batch ignores any relationships that might exist \emph{between} datapoints. In many cases, there's
  tangible relationships between datapoints, and the entire procedure can be
  generalised beyond discrete graphs.
\end{itemize}

In summary, we may view the above examples represent two conceptualisations of information flow through the architecture of a network, the former through the encoding of neighbourhoods of a graph by an adjacency matrix, which allows us to have parallel but interacting networks, fit for graph based problems; the latter formalises batch computation through the artefact of a graph with no edges.
Both of these stories partially overlap, and are unified in their requirements that our datasets have to now be living \emph{over graphs}. 
This is often the case, and as one warms up to this idea, they can easily start noticing that many common examples of architectures and datasets that appear to
have nothing to do in graphs are in fact inherently tied to them.

The way to think about information flow in a GNN is that information is
processed in layers, just like before.
But unlike before, where input and output ports were $1$-indexed euclidean
spaces, here input and output ports are going to be $V$-indexed euclidean spaces
and fibrewise smooth maps.
That is, we will have an extra layer of indexing throughout our computation.
This index corresponds to a node, and a fiber over it corresponds to the feature
space of that node.
In a single layer, each such broadcasts feature information to all its
neighbours.
At the receiving end, each node receives feature information from edges on its
neighbours, after which it \emph{aggregates} that information, and then uses it
compute an updated feature for itself (of possibly different dimensionality), as input to the next layer.

What is described here is in machine learning literature known as a \emph{Graph
  Convolution Neural Network} (GCNN), a special case of more general
\emph{message-passing} graph neural networks \cite{MPForQuantumChemistry, GNNDynProg}.
While a lot can be said about the intricacies of general message pasing neural
networks, in this paper we focus on GCNNs they can be written down in a rather
simple mathmatical form. Below we write the formula corresponding to a
\emph{single GCNN layer}, using the representation of an $n$-node graph through
its adjacency matrix $A$, though of as an object of $\R^{n \times n}$.

\begin{definition}[GCNN Layer]\label{def:prim_gcnn_layer}
Given input and output types  $\R^{n \times k}$ and $ \R^{n
  \times k'}$, a single GCNN layer between them is uniquely defined
as a smooth function
  \begin{align*}
    f : \R^{k \times k'} \times \R^{n \times n} \times \R^{n \times k} &\to \R^{n \times k'}\\
    (W, A, X) &\mapsto \sigma(AXW)
  \end{align*}
\end{definition}

The relationship between the left multiplication by the adjecency matrix and the
aforementioned broadcasting mechanism present in GCN as mentioned above
is demystified in \cite{kipf2016semi}, to which we refer the interested reader.
In summary, here we remark that $A$ plays the role of selecting
which nodes communicate with one another in the network,
represented by a morphisms of type $\R^{n \times k} \to \R^{n \times k'}$.
Indeed, the other terms in the domain of the GCN layer morphism
represent the parameter matrix of type $\R^{k\times k'}$
and the adjacency matrix of type $\R^{n \times n}$respectively


Two primitive layers can be composed, yielding a two-layer graph convolutional neural
network.\footnote{An underlying assumption in what follows is that the two layers have the same graph as an inductive bias. While restrictive as an abstraction, we can ascertain that in practice it is rare to want to change the graph topology in between layer computations, with the notable exception of \textit{graph pooling}, whereby the graph is coarsened to learn global behaviours on the graph representations as well as reducing memory and computational burden (see \cite{wang2020haar} for a well-known example).}


\begin{definition}[Composition of primitive GCNN
  layers]\label{def:prim_composition}
  Given two GCNN layers $\R^{n \times k} \to \R^{n \times k'}$ and $\R^{n \times
  k'} \to \R^{n \times k''}$ with implementation functions $f$ and $g$, respectively,
  we define a \emph{two-layer} Graph Convolutional Neural Network of type $\R^{n \times
    k} \to \R^{n \times k''}$ whose implementation is a smooth function
  \begin{align*}
    h : (\R^{k \times k'} \times \R^{k' \times k''}) \times \R^{n \times n} \times \R^{n \times k} &\to \R^{n \times k''}\\
    ((W, W'), A, X) &\mapsto g(W', A, f(W, A, X))
  \end{align*}

\end{definition}

\begin{figure}[H]
  \centering
  \includegraphics[width=\textwidth]{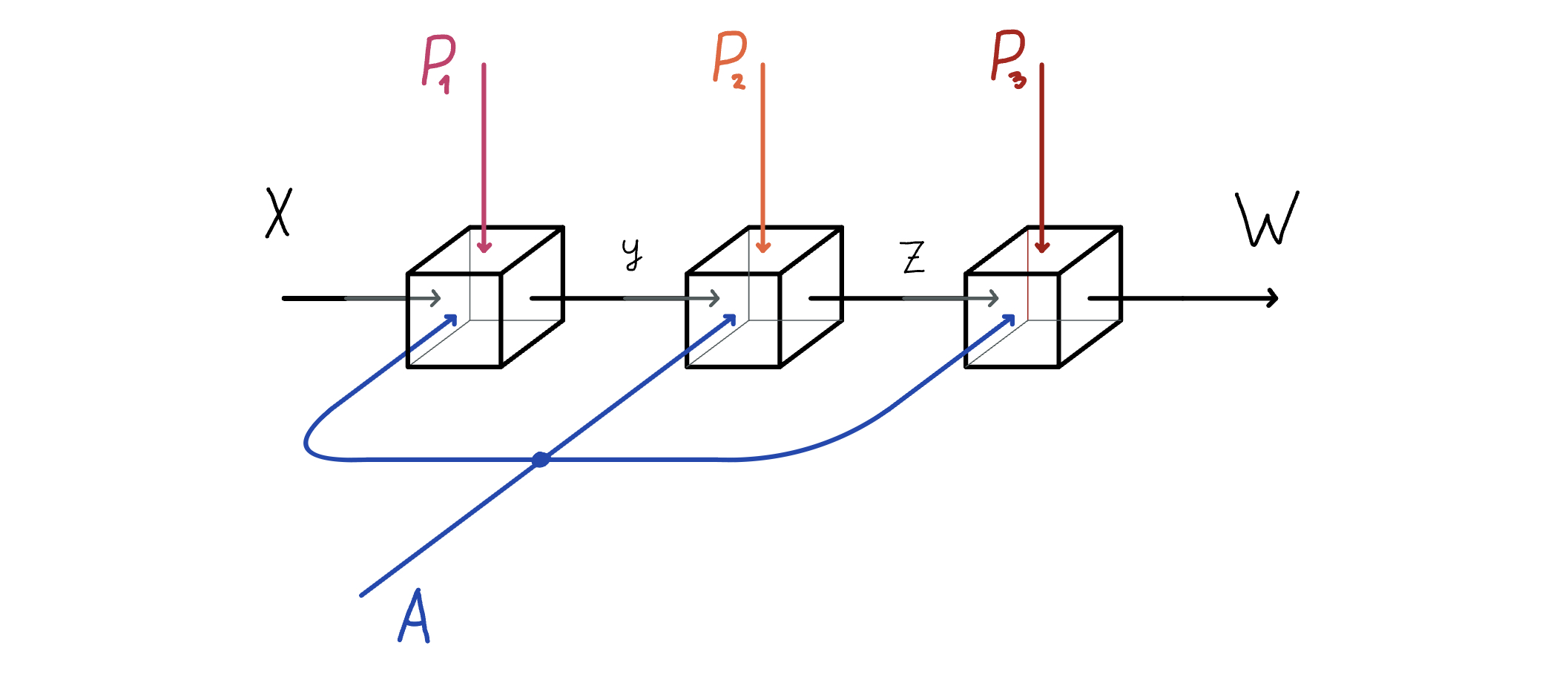}
  \caption{Composition of three layers in a graph convolutional neural network.
    On the vertical axis, we see three parameter spaces: $P_1$, $P_2$, and $P_3$
    of each layer. On the horizontal axis, we see the adjacency matrix $A$ as a
    ``parameter'' to each layer. By composing  new layers on the right the number
    of parameters increases, but the adjacency matrix is just copied.}
  \label{fig:graph_convolutional}
\end{figure}

The definition above can be extended to a Graph Convolutional Neural Network with an arbitrary number of layers (Figure \ref{fig:graph_convolutional}).
This finally allows us to define a general category of
$n$-node graph neural networks.
This construction is actually a \emph{bicategory}, as composition isn't strictly
associative and unital, and is more naturally thought of with 2-cells.
For presentation purposes we include the definition of 2-cells only prior to
stating Theorem \ref{thm:two_functor}.

\begin{definition}[Bicategory of Graph Convolutional Neural Networks]
We define $\GCNN_n$, the bicategory of Graph Neural Networks on graphs with $n$
nodes. Its objects are euclidean spaces of the form $\R^{n \times k}$
for some $k$, and a morphism $\R^{n \times k} \to \R^{n \times k'}$ is a graph
convolutional neural \emph{network}, i.e. a sequence of $m$ primitive GCNN layers.
We define 2-cells in Def. \ref{def:two_cells}.
\end{definition}

It is straightforward, albeit tedious to show that coherence conditions are
satisfied. While this might be the first time a graph convolutional neural
networks have been defined as a bicategory, this definition is merely a starting
point for this paper.
It has a number of intricate properties, and is defined within a very concrete setting.
In what follows, we proceed to ask the question: is there a
factorisation of the bicategory $\GCNN_n$ into sensible semantic components?
Inspired by previous work of \cite{GradientBasedLearning} we ask whether one can
apply the categorical construction of $\Para$ and $\Lens$ on a particular base
category to model gradient based learning in this setting. That is, we answer
the following four questions:

\begin{itemize}
\item What base category correctly models the adjacency-matrix sharing aspect of
  GCNNs?
\item What is the relation of that base category to the $\Para$ construction?
\item Can one meaningfully backpropagate in that base category, i.e. is that
  category a reverse derivative category?
\item Finally, are the above answers compositional in the manner described in
  \cite{GradientBasedLearning}[Section 3.1]?
\end{itemize}

In addition to answers to the above, the main contribution of the paper is the
proof that there exists a faithful 2-functor $\GCNN_n \to \Para(\CoKl(\adjobj \times
-))$. This functor tells us that GCNNs are a special kind of parametric cokleisli
morphisms, suggesting further generalisations.

\section{Graph Convolutional Neural Networks as Parametric Lenses on base $\CoKl(A \times -)$}

In this section we recall the well-understood categorical construction ${\CoKl(A
\times -)}$, the CoKleisli category of the $A \times -$ comonad for some $A : \cC$, and describe its role in Graph Convolutional Neural Networks.
We do that by proving two important things:

\begin{itemize}
\item $\CoKl(A \times -)$ is a $\cC$ actegory. This permits us to apply the
  $\Para$ construction to it, and show that GCNNs are morphisms in
  $\Para(\CoKl(A \times -))$, for a particular choice of $A$ and base category $\cC$.
\item $\CoKl(A \times -)$ is a reverse derivative category when $\cC$ is. This
  permits us apply the reverse-derivative category fromework to it, and
  compositionally perform backpropagation.
\end{itemize}

This will allow us to simply instantiate the categorical framework presented in
\cite{GradientBasedLearning} and show that GCNNs are recovered in analogous way,
but with a different base category.
We proceed by first recalling the product comonad, and its CoKleisli
category.

\begin{definition}
  Let $\cC$ be a cartesian category.
  Fix an object $A : \cC$.
  Then we can define the \emph{product comonad} $A \times - : \cC \to \cC$ with comultiplication $\delta_X \coloneqq \Delta_A \times X$ and counit $\epsilon_X \coloneqq \pi_X$, where $\Delta_X : A \to A \times A$ is the copy map, and $\pi_X : A \times X \to X$ is the projection.
\end{definition}

Each comonad has an associated CoKleisli category.
We proceed to unpack the details of the CoKleisli category of the above comonad, which we write as $\CoKl(A \times -)$.
It's objects are objects of $\cC$, and a morphism $X \to Y$ in $\CoKl(A \times -)$ is a morphism $A \times X \to Y$ in the underlying category $\cC$.
Composition of $f : \CoKl(A \times -)(X, Y)$ and $g : \CoKl(A \times -)(Y, Z)$ is defined using $\delta_X$; the resulting morphism in $\CoKl(A \times -)(X, Z)$ is
the following composite in $\cC$:
\[
A \times X \xrightarrow{\delta_X} A \times A \times X \xrightarrow{A \times f} A
\times Y \xrightarrow{g} Z
\]

Just like $\Para(\cC)$, the category $\CoKl(A \times -)$ has a convenient
graphical language for describing morphisms (Figure \ref{fig:cokl_composition}).
A composition of three morphisms $f, g, h$ in $\CoKl(A \times -)$ can be
depicted as a string diagram where the global context $A$ is shared on the
vertical side. Every time we want to compose another morphism, we need to plug
in the same global state $A$ used for other morphisms - this is done by copying
on the vertical direction.

\begin{figure}[H]
  \centering
  \includegraphics[width=0.8\textwidth]{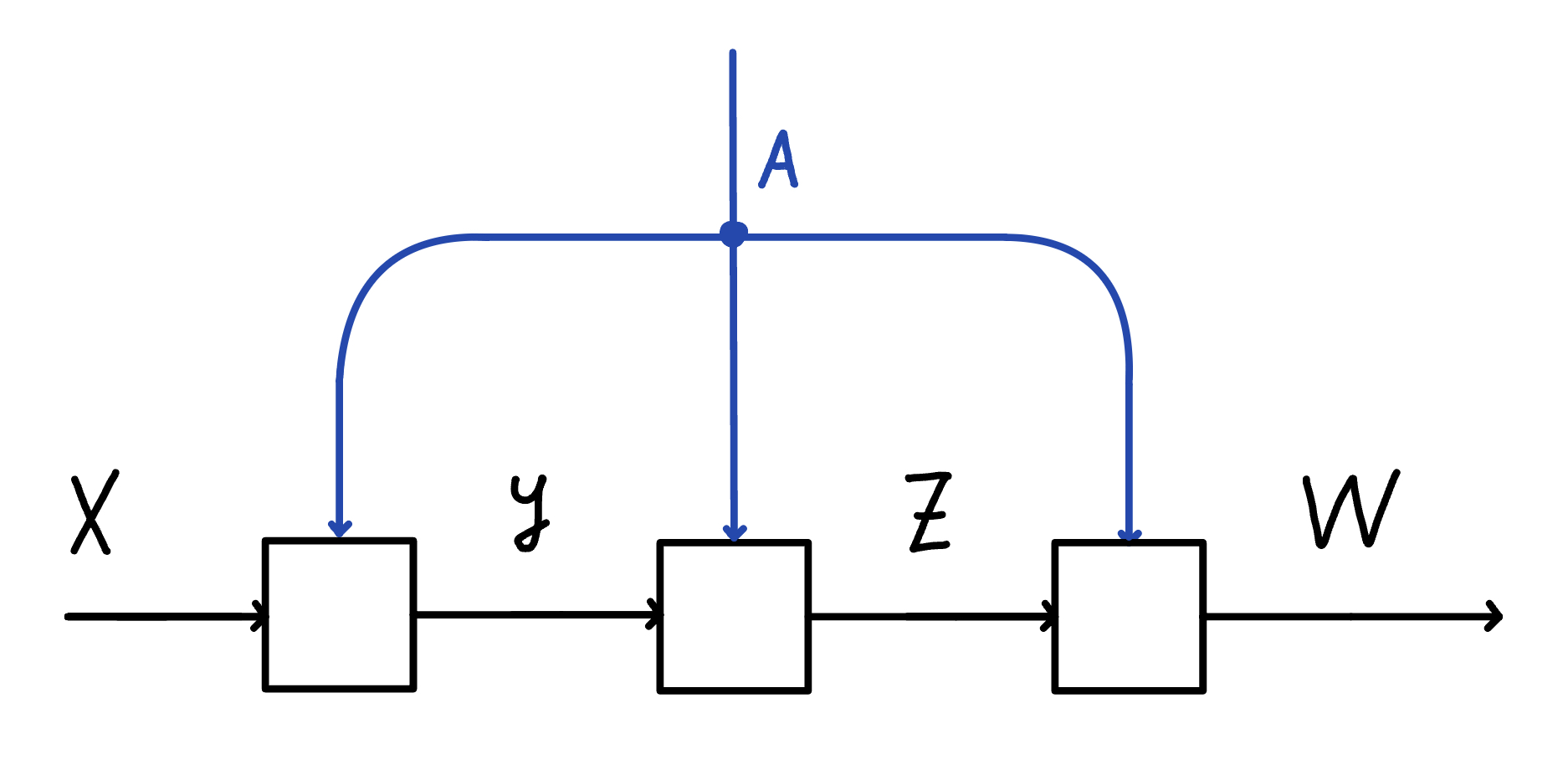}
  \caption{Unlike $\Para$ composition which allows each morphism to be
    parameterised by an arbitrary object, the $\CoKl(A \times -)$ construction
    fixes a specific object $A$ to be the parameter for all of them.}
  \label{fig:cokl_composition}
\end{figure}

A useful lemma for the remainder of the paper will be to show that the category
$\CoKl(A \times -)$ can be given a monoidal structure with the cartesian product.

\begin{lemma}
Let $\cC$ be a cartesian category. Then the category $\CoKl(A \times -)$ is
cartesian monoidal, where the product of morphisms $f : \CoKl(A \times -)(X, Y)$
and $g : \CoKl(A \times -)(X', Y')$ is the morphism of type $\CoKl(A
\times -)(X \times X', Y \times Y')$ defined as

\[
A \times X \times X' \xrightarrow{\Delta_A \times 1_{X \times X'}} A \times A
\times X \times X' \xrightarrow{\textrm{swap}} A \times X \times A \times X'
\xrightarrow{f \times g} Y \times Y'
\]
\end{lemma}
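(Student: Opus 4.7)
The plan is to establish cartesian monoidality by the economical route of first proving that $\CoKl(A \times -)$ has finite products inherited from $\cC$, and then recovering the claimed formula for the tensor of morphisms as the induced product. This bypasses having to check the monoidal coherence diagrams directly: once the universal properties are in place, the symmetric cartesian monoidal structure follows automatically.

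First I would check that the terminal object $1$ of $\cC$ remains terminal in $\CoKl(A \times -)$, which is immediate because a CoKleisli morphism $X \to 1$ is a $\cC$-morphism $A \times X \to 1$ and there is exactly one such map. For the binary product of $X$ and $X'$ I would take the $\cC$-product $X \times X'$, equipped with the CoKleisli projections obtained by precomposing the $\cC$-projections with the counit $\epsilon_{X \times X'} = \pi_{X \times X'} : A \times X \times X' \to X \times X'$. Given a CoKleisli cone $f : Z \to X$ and $g : Z \to X'$, i.e.\ $\cC$-morphisms $A \times Z \to X$ and $A \times Z \to X'$, the mediating morphism is the $\cC$-pairing $\langle f, g \rangle : A \times Z \to X \times X'$ reread as a CoKleisli morphism $Z \to X \times X'$. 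After unfolding CoKleisli composition --- which duplicates $A$ via $\delta_Z = \Delta_A \times 1_Z$, feeds one copy to $\langle f, g \rangle$, and discards the other via the counit baked into the CoKleisli projection --- the projection equations collapse to the corresponding $\cC$-pairing equations, and uniqueness transfers from $\cC$.

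Next I would recover the formula in the statement as the induced product of two CoKleisli morphisms $f : X \to Y$ and $g : X' \to Y'$, namely $\langle f \comp \pi_1, g \comp \pi_2 \rangle$ in the CoKleisli sense. Unpacking each leg, CoKleisli-composing $f$ with the first CoKleisli projection gives a $\cC$-morphism $A \times X \times X' \to Y$ acting by $(a, x, x') \mapsto f(a, x)$, and symmetrically for $g$. Pairing the two and regrouping the arguments is exactly the three-step composite of the statement: $\Delta_A \times 1_{X \times X'}$ duplicates $A$, the swap places $A \times X$ and $A \times X'$ side by side, and $f \times g$ acts in parallel in $\cC$.

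The main obstacle is purely bookkeeping, namely tracking which copy of $A$ is fed into which factor at each stage, and making sure the swap in the middle of the composite is the one forced by the $\cC$-pairing. No genuine coherence hypotheses are needed beyond the symmetry and associativity of $\times$ in $\cC$, so bifunctoriality of $\otimes$ and the coherence pentagon and hexagon come free of charge from the universal property of products.
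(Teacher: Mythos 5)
Your proof is correct, but note that the paper itself states this lemma \emph{without} proof: it treats the cartesian structure on $\CoKl(A \times -)$ as standard (its appendix invokes the ``simple slice'' characterisation of this category via \cite{ReverseDerivativeCategories}, where that structure is well known), so there is no in-paper argument to compare yours against. Your universal-property route is the canonical way to fill the gap, and every step checks out: the terminal object is inherited because $\cC(A \times X, 1)$ is a singleton; the product of $X$ and $X'$ is $X \times X'$ with projections $\iota(\pi_1)$ and $\iota(\pi_2)$ in the notation of Lemma \ref{lemma:base_into_cokleisli}; the mediating map is the $\cC$-pairing, and uniqueness transfers because CoKleisli postcomposition with such a projection unfolds to plain $\cC$-postcomposition with $\pi_X$ (the extra copy of $A$ created by $\delta$ is discarded by the counit inside the projection, exactly as you say). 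Genuine finite products then give bifunctoriality, symmetry, and the coherence axioms for free, and your unpacking of the induced product of morphisms --- each leg acting as $(a,x,x') \mapsto f(a,x)$, respectively $(a,x,x') \mapsto g(a,x')$, whose pairing regroups into $(\Delta_A \times 1_{X \times X'}) \comp \mathrm{swap} \comp (f \times g)$ --- recovers the displayed formula exactly. One cosmetic caution: the paper's $\comp$ is diagrammatic composition, so the expression you wrote as $\langle f \comp \pi_1,\, g \comp \pi_2 \rangle$ is typed in the wrong order and should read $\langle \pi_1 \comp f,\, \pi_2 \comp g \rangle$; your prose makes clear you intended the correctly typed composite, so this is a notational slip rather than a mathematical one.
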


In the next two subsections we proceed to show two things: 1) that $\CoKl(A \times -)$ is a $\cC$-actegory, and 2) that $\CoKl(A \times -)$ is a reverse derivative category when its underlying base is.

\subsection{$\CoKl(A \times -)$ is a $\cC$-actegory}

We've shown how to add a global context $A$ to morphisms in a category. In the
context of graph neural networks this will allow the each morphism to use the
data of a particular adjacency matrix $A$ to broadcast information from each
node to its neighbours.
However, what we'll be interested in is \emph{learning how to} broadcast this
information, hence this is why we will need the $\Para$ construction.

As $\Para$ is a construction defined for any monoidal category, we might simply
be satisfied by using the monoidal structure of $\CoKl(A \times -)$.
However, the situation is slightly more nuanced. In the context of graph neural
networks we do not expect arbitrary reparameterisations to have access to the
global context $A$, hence a simple monoidal category will not suffice.
Luckily, the $\Para$ construction in broader generality can be applied to any
\emph{actegory} \cite{Actegories}, a particular generalisation of a monoidal
category.
Simply speaking, given a monoidal category $\cM$, an $\cM$-actegory $\cC$ is given by
a strong monoidal functor $\cM \to \internal{\Cat}{\cC}{\cC}$. We can easily see
that any monoidal category $\cM$ is an actegory acting on itself, and is given by currying of the monoidal product $\otimes$.

In our case, we'll be interested in in a $\cC$-actegory $\CoKl(A \times -)$. In
order to define it, we'll make use of the fact that the embedding $\cC \to
\CoKl(A \times -)$ preserves finite products (i.e. is strong monoidal).

\begin{lemma}\label{lemma:base_into_cokleisli}
Given a monoidal category $\cC$, there is a strict symmetric monoidal
identity-on-objects functor $\iota : \cC \to \CoKl(A \times -)$ which maps a morphism $f
: X \to Y$ to $A \times X \xrightarrow{\pi_1} X \xrightarrow{f} Y$.
\end{lemma}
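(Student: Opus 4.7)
The plan is to define $\iota$ on objects by $\iota(X) = X$ and on morphisms by $\iota(f) := f \circ \pi_X$, where $\pi_X : A \times X \to X$ is the second projection. This is the only reasonable candidate: we simply discard the global state $A$ and apply $f$. The proof then breaks into three blocks: functoriality, strict preservation of the monoidal product, and preservation of the symmetry.

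For functoriality I would first observe that the identity $X \to X$ in $\CoKl(A \times -)$ is precisely the counit $\epsilon_X = \pi_X$, so $\iota(\id_X) = \id_X \circ \pi_X = \pi_X$ is correct on the nose. For composition, unfold $\iota(g) \circ_{\CoKl} \iota(f)$ using the CoKleisli composition formula: this gives the composite $A \times X \xrightarrow{\Delta_A \times X} A \times A \times X \xrightarrow{A \times (f \circ \pi_X)} A \times Y \xrightarrow{g \circ \pi_Y} Z$. The second $A$ introduced by $\Delta_A$ is immediately discarded by $\pi_X$ inside the middle map, and the outer $A$ is discarded by $\pi_Y$ at the end, collapsing the whole composite to $g \circ f \circ \pi_X = \iota(g \circ f)$. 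This is a one-line diagram chase once one notes that $\pi \circ (\Delta_A \times \id) = \id$ in the appropriate slot.

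For the strict symmetric monoidal structure, strictness on objects is immediate since both $\iota$ and the monoidal product on $\CoKl(A \times -)$ supplied by the previous lemma agree with the cartesian product of $\cC$ on objects, so $\iota(X) \otimes \iota(X') = X \times X' = \iota(X \times X')$, and the unit $1$ is fixed. On morphisms I would unfold $\iota(f) \otimes_{\CoKl} \iota(g)$ using the explicit formula from the preceding lemma: the combination $(\Delta_A \times 1) \fatsemi \mathrm{swap} \fatsemi ((f \circ \pi_X) \times (g \circ \pi_{X'}))$ duplicates $A$, sorts the wires, and then each copy of $A$ is discarded by the respective $\pi$. The resulting composite is exactly $(f \times g) \circ \pi_{X \times X'} = \iota(f \times g)$.

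Finally, the coherence data (associator, unitors, symmetry) in $\CoKl(A \times -)$ are, by the previous lemma's construction, obtained by applying $\iota$ to the corresponding coherence morphisms in $\cC$. Thus the coherence axioms in $\CoKl(A \times -)$ are literally the $\iota$-images of the coherence axioms in $\cC$, and strictness is preserved. The only mild obstacle is bookkeeping around the $\mathrm{swap}$ map in the product definition; once one checks that post-composing with projections trivialises the duplicated $A$-wires, the whole proof reduces to two short computations that share the same pattern.
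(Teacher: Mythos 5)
Your proposal is correct and takes essentially the same route as the paper: the heart of both arguments is the same unfolding of the product $\iota(f) \times \iota(g)$ in $\CoKl(A \times -)$ via $\Delta_A$, the swap, and the projections, showing it collapses to $\iota(f \times g)$. You are in fact somewhat more thorough than the paper, which treats functoriality (your counit observation $\iota(\id_X) = \pi_X$ and the composition check) and the symmetry/coherence data as routine and verifies only the monoidal structure on morphisms.
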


\begin{proof}
The monoidal structure is trivially preserved on objects. On morphisms we have to
check whether there is an isomorphism $\iota(f \times g) \cong \iota(f) \times
\iota(g)$, where $f : X \to Y$ and $g : X' \to Y'$.
On the left side, this unpacks to a morphism $$A \times X \times X'
\xrightarrow{\pi_{X \times X'}} X \times X' \xrightarrow{f \times g} Y
\times Y'$$
On the right side, we have that $\iota(f) : A \times X \xrightarrow{\pi_X} Y$ and
$\iota(g) : A \times X' \xrightarrow{\pi_Y} Y'$. Their product in $\CoKl(A
\times -)$ is the morphism

\[
  A \times X \times X' \xrightarrow{\Delta_A \times 1_{X \times X'}} A \times A
  \times X \times X' \xrightarrow{\textrm{swap}} A \times X \times A \times X'
  \xrightarrow{(\pi_X \comp f) \times (\pi_X' \comp g)} Y \times Y'
\]
which is equivalent to the one above, concluding the proof.
\end{proof}

The existence of the strict monoidal functor $\iota$ allows us to get the
desired actegory. That is, instead of thinking of $\CoKl(A \times -)$ as acting
on itself, we can think of $\cC$ as acting on $\CoKl(A \times -)$.

\begin{definition}
  We define the \emph{non-contextual action} $\odot$ of $\cC$ on $\CoKl(A \times
  -)$ as
  \[
    \cC \xrightarrow{\iota} \CoKl(A \times -) \xrightarrow{\textrm{curry } \times} \internal{\Cat}{\CoKl(A \times
      -)}{\CoKl(A \times -)}.
  \]
\end{definition}

This allows us to finally apply the $\Para$ construction to the non-contextual
coeffect, yielding the bicategory relevant for the rest of the paper, which we
refer to as the \emph{Parametric CoKleisli category.}

Unpacking $\Para_{\odot}(\CoKl(A \times -))$, we get a bicategory whose objects
are objects of $\cC$; and a morphism $X \to Y$ is a choice of a parameter object
$P : \cC$ and a morphism $f : P \times A \times X \to Y$.
A 2-cell $(P', f') \Rightarrow (P, f)$ is a choice of a morphism $r : P \to P'$
such that the diagram below commutes. 
\[\begin{tikzcd}
    {P' \times A \times X} && {P \times A \times X} \\
    & Y
    \arrow["{r \times A \times X}", from=1-1, to=1-3]
    \arrow["f", from=1-3, to=2-2]
    \arrow["{f'}"', from=1-1, to=2-2]
\end{tikzcd}\]

Two morphisms $(P, f) : X \to Y$ and $(Q, g) : Y \to Z$ are composed by copying
the global state $A$, and taking the product of the parameters $P$ and $Q$.
That is, their composite is the morphism $(Q \times P, h) : X \to Z$, where $h$
unpacks to
\begin{align}\label{eq:para_cokl_composition}
  \begin{split}
Q \times P \times A \times X \xrightarrow{1_{Q \times P} \times \Delta_A \times
  X} Q \times P \times A \times A \times X \xrightarrow{\textrm{swap}} \\
\to Q \times A \times P \times A \times X \xrightarrow{1_{Q \times A} \times f} Q \times A
\times Y \xrightarrow{g} Z
  \end{split}
\end{align}

This defines the data of a composite morphism drawn in Figure
\ref{fig:graph_convolutional}. We see that the global data $A$ is threaded through
on one axis, while the parameters are used on another. This concludes the definition of our base category $\Para_{\odot}(\CoKl(A \times
-))$.

We are now one step away from
seeing how graph neural networks arise in a compositional way.
The last missing piece is the deferred definition of 2-cells of $\GCNN_n$, which
we present below.

\begin{definition}[2-cells in $\GCNN_n$]\label{def:two_cells}
  Given two graph convolutional neural networks
  \[
    h : {\displaystyle \prod_{i : \overline{1 + m + 1}} \R^{d_i \times d_{i +
          1}}} \times \R^{n \times n} \times \R^{n \times k} \to \R^{n \times l}
  \]
  and
  \[
    h' : {\displaystyle \prod_{i : \overline{1 + m' + 1}} \R^{d'_i \times d'_{i
          + 1}}} \times \R^{n \times n} \times \R^{n \times k} \to \R^{n \times
      l}
  \]
  (where $m$ and $m'$ are numbers of layers of each network, $d_i$ and $d'_i$
  are dimensions of these layers, such that $d(0) = k$ (the dimensionality of
  the incoming layer) and $d(m + 1) = k'$ (the dimensionality of the incoming
  layer)) a 2-cell between them is a smooth function $r : {\displaystyle
    \prod_{i : \overline{1 + m + 1}} \R^{d_i \times d_{i + 1}}} \to
  {\displaystyle \prod_{i : \overline{1 + m' + 1}} \R^{d'_i \times d'_{i + 1}}}$
  such that $(r \times \R^{n \times n} \times \R^{n \times k}) \comp h = h'$.
\end{definition}

This enables us to state our main theorem in this paper.
We first fix $\Smooth$ to be the cartesian category of euclidean spaces and smooth functions between them \cite[Example 2.3]{ReverseDerivativeCategories}.
We then choose an object to define the product comonad on -- the type of the adjacency matrices of a graph with $n$ nodes -- $\R^{n \times n} : \Smooth$.
Now are are able to show that \textbf{Graph Convolution Neural Networks are
  morphisms in ${\pc}$}. We do that by proving there is an injective-on-objects
and faithful 2-functor embedding them into the aforementioned category.

\begin{theorem}\label{thm:two_functor}
  There exists an injective-on-objects, faithful 2-functor
  \[
    \kappa : \GCNN_n \to \pc
  \]
where $\adjobj \times -$ is the product comonad defined on $\Smooth$.
\end{theorem}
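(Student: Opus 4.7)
The plan is to build the 2-functor $\kappa$ level by level and then verify injectivity on objects and faithfulness on hom-categories. The definitions are rigged so that the work reduces to a bookkeeping comparison of two composition recipes.

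First, I would define $\kappa$ on objects as the identity: $\kappa(\R^{n \times k}) \coloneqq \R^{n \times k}$, which sits in $\pc$ as an object of $\Smooth$. This is trivially injective on objects. Next, on 1-cells, I would send a GCNN morphism $h : \R^{n \times k} \to \R^{n \times l}$ given by a length-$m$ sequence of primitive layers with intermediate dimensions $d_0, d_1, \dots, d_{m+1}$ (with $d_0 = k$, $d_{m+1} = l$) to the parametric CoKleisli morphism $\bigl(\textstyle\prod_{i} \R^{d_i \times d_{i+1}},\; h\bigr)$, where the underlying smooth map $h$ is exactly the iterated composite in Definition~\ref{def:prim_composition}, now reread as a morphism $\prod_i \R^{d_i \times d_{i+1}} \times \R^{n \times n} \times \R^{n \times k} \to \R^{n \times l}$. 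On 2-cells I would send the reparameterisation $r$ from Definition~\ref{def:two_cells} to itself, viewed as a 2-cell in $\pc$; this is legitimate because the commutativity condition $(r \times \R^{n \times n} \times \R^{n \times k}) \comp h = h'$ is literally the triangle defining 2-cells in $\Para_\odot$ specialised to $A = \R^{n \times n}$.

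The main work is verifying functoriality, i.e.\ that $\kappa$ preserves composition of 1-cells (identities are handled similarly, using $(\R^0, \pi_X)$ as the identity on $\R^{n \times k}$). I would prove this by induction on the number of layers. For two single layers $f$ and $g$, I would unpack the composite $\kappa(g) \comp \kappa(f)$ using Equation~\ref{eq:para_cokl_composition}: the adjacency argument is duplicated via $\Delta_{\adjobj}$, reshuffled by swap so that both layers receive the same $A$, and the parameters of each layer appear in the product $\R^{k' \times k''} \times \R^{k \times k'}$. This is, after naturality of swap and the cartesian structure, exactly the formula $((W, W'), A, X) \mapsto g(W', A, f(W, A, X))$ of Definition~\ref{def:prim_composition}. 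The induction step is the same diagram with one of the sides already a composite; here I would use associativity of product and of CoKleisli composition inherited from the comonad laws (these are precisely the coherence 2-cells of the bicategory $\pc$, which match the coherence 2-cells of $\GCNN_n$ used to state associativity of layer composition).

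For faithfulness, since $\kappa$ on 2-cells is the identity-on-underlying-smooth-function map between two hom-sets that have identical defining data (a smooth reparameterisation $r$ between parameter products satisfying the same commuting triangle), distinct 2-cells in $\GCNN_n$ go to distinct 2-cells in $\pc$, yielding local faithfulness. Combined with injectivity on objects (identity map on euclidean spaces of the form $\R^{n \times k}$), this completes the proof. The main obstacle I anticipate is not conceptual but notational: carefully tracking the swap and copy maps in the iterated composition so that the $m$-layer composite in $\pc$ is seen to equal, on the nose (or at worst up to the canonical coherences of the cartesian monoidal structure), the recursively defined $m$-layer GCNN; handling this cleanly will likely require choosing a canonical left-leaning bracketing of the parameter product and invoking strictness of the underlying cartesian structure on $\Smooth$ so that the bicategorical coherence data is trivial in practice.
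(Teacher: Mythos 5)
Your proposal is correct and follows essentially the same route as the paper's own proof: identity on objects, mapping a GCNN $h$ to the pair $\bigl(\prod_i \R^{d_i \times d_{i+1}}, h\bigr)$ with parameter spaces made explicit, identity on 2-cells, and verification that composition in $\pc$ via Equation~\ref{eq:para_cokl_composition} reproduces Definition~\ref{def:prim_composition}. Your added detail (induction on the number of layers and explicit tracking of the swap/copy coherences) simply fills in what the paper dismisses as routine.
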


\begin{proof}
  $\kappa$ is identity on objects.
  On morphisms, it maps a GCNN
  \[
    h : {\displaystyle \prod_{i : \overline{1 + m + 1}} \R^{d_i \times d_{i + 1}}} \times \R^{n \times n} \times \R^{n \times k} \to \R^{n \times l}
  \]
  to a pair $({\displaystyle \prod_{i : \overline{1 + m + 1}} \R^{d_i \times
      d_{i + 1}}}, h)$, turning the parameter spaces explicit.
  Here ${\displaystyle \prod_{i : \overline{1 + m + 1}} \R^{d_i \times d_{i +
        1}}}$ is the local parameter space, and $R^{n \times n}$ is the global one.
  Following Eq. \ref{eq:para_cokl_composition}, we see that a composition of
  morphisms in this category correctly replicates the composition rule defined
  in Def. \ref{def:prim_composition}.
  Likewise, it's easy to see that 2-cells induced by $\Para$ are exactly the
  those of $\GCNN_n$.
  The coherence conditions are routine.
\end{proof}

This tells us that we can really focus on $\pc$ as our base category, and work
with arbitrary $n$-node GCNNs that way.

\subsubsection{Aside: Interaction between $\CoKl(A \times -)$ and $\Para(\cC)$}

The interested reader might have noticed a similarity between the $\CoKl(A
\times -)$ and
$\Para(\cC)$ constructions. They both involve a notion of a parameter, but they
differ in how they handle composition and identities. The former one is always
parameterised by the same parameter, while the latter one allows each morphism
to choose their parameter.
On identity maps, the former one picks out a parameter that will be deleted,
while the latter one picks out the monoidal unit.
When composing $n$ morphisms, the former takes in \emph{one} input parameter and copies it $n$ times -- one time for each morphism -- while the latter one takes in $n$ different parameter values of possibly different types, and relays each to the corresponding constituent morphism.
We refer the reader to appendix B where these similarities are explored in terms
of an oplax functor between these constructions, and to the table below
for a high-level outline of differences.

\begin{table}[h]
  \begin{center}
    \caption{Both $\Para$ and $\CoKl(A \times -)$ are oplax colimits of
      2-functors, but differ in key aspects.}
    \begin{tabular}{|l|l|l|}
      \hline
      & $\Para$        & $\CoKl(A \times -)$ \\ \hline
      Parameter context    & Local          & Global              \\ \hline
      Type of construction & Bicategory     & Category            \\ \hline
      Required structure   & Graded Comonad & Comonad             \\ \hline
    \end{tabular}
  \end{center}
\end{table}

\subsection{$\CoKl(A \times -)$ is a reverse derivative category when $\cC$ is}\label{subsec:cokl_rdc}

Following in the footsteps of \cite{GradientBasedLearning}, in addition to requiring that $\CoKl(A \times -)$ is an actegory, we need to show we can in a sensible way \emph{backpropagate} through this category.
Formally, this means that $\CoKl(A \times -)$ is a reverse-derivative category, first defined in \cite{ReverseDerivativeCategories}.

An intuitive way to think about CoKleisli categories as reverse derivative categories is using partial derivatives.
As described in \cite[page 6.]{ReverseDerivativeCategories}, the forward derivative of $f : \CoKl(A \times -)(X, Y)$ is interpreted as a partial derivative of the underlying $f : A \times X \to Y$ with respect to $B$.
Its \emph{reverse} derivative is interpreted as the \emph{transpose} of same partial derivative.
This brings us to the second theorem of this paper.

\begin{restatable}[CoKleisli of a RDC is RDC]{theorem}{CoKleisliIsRDC}\label{prop:cokl_is_rdc}
  Let $\cC$ be a reverse derivative category. Fix an object $A : \cC$. Then
  $\CoKl(A \times -)$ is too a reverse derivative category whose reverse
  differential combinator for a map $f : \CoKl(A \times -)(X, Y)$ is defined as

  \[
    R_{\CoKl(A \times -)}[f] \coloneqq R_{\cC}[f] \comp \pi_X
  \]
  where $f, R_{\cC}[f]$, and $\pi_X$ are all treated as morphisms in $\cC$.
\end{restatable}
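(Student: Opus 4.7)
The plan is to verify, one by one, the reverse-derivative axioms of \cite{ReverseDerivativeCategories} for $\CoKl(A\times -)$ equipped with the combinator $R_{\CoKl(A\times -)}[f] \coloneqq R_\cC[f] \comp \pi_X$ stated in the theorem. The guiding intuition, already flagged in the paragraph preceding the statement, is that this combinator is the \emph{partial} reverse derivative of the underlying $\cC$-morphism $f : A\times X \to Y$ in its $X$-coordinate alone: we take the total reverse derivative $R_\cC[f] : A\times X\times Y \to A\times X$ in $\cC$ and then discard the $A$-contribution by post-composing with $\pi_X : A\times X \to X$. Every axiom will then be reduced to the corresponding axiom in $\cC$ by unfolding definitions, applying the $\cC$-axiom, and exploiting that $\pi_X$, $\Delta_A$, and the swap maps are natural and additive.

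First I would equip $\CoKl(A\times -)$ with the cartesian left-additive structure it inherits from $\cC$: sums and zero morphisms of $A\times X\to Y$ in $\cC$ give sums and zero morphisms of $X\to Y$ in $\CoKl(A\times -)$, and the cartesian monoidal structure of the preceding lemma supplies the products. The identity-on-objects functor $\iota : \cC \to \CoKl(A\times -)$ of Lemma~\ref{lemma:base_into_cokleisli} is strict symmetric monoidal and additive, so left-additivity transports cleanly. With this in place, the axioms asserting additivity of $R$ in each of its inputs and its behaviour on zero and projection morphisms are immediate: each side unfolds to a $\cC$-reverse-derivative expression post-composed with $\pi_X$, and the corresponding $\cC$-axiom applies verbatim, using only linearity of $\pi_X$ to conclude. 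The axiom for pairings is analogous: one unfolds $\langle -, -\rangle$ in $\CoKl(A\times -)$ via the product formula of the preceding lemma (which introduces $\Delta_A$ and a swap), reduces to the pairing axiom in $\cC$, and observes that the $\pi_X$-projection commutes past the shuffle.

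The main obstacle is the chain rule, and its consequence the symmetric-mixed-partials axiom. Here one must compute $R_{\CoKl}[g \comp_{\CoKl} f]$ where, as recalled before Eq.~\ref{eq:para_cokl_composition}, $g \comp_{\CoKl} f$ unfolds in $\cC$ to the composite $(\Delta_A\times X) \comp (A\times f) \comp g$, and show it matches the chain-rule expression built from $R_{\CoKl}[f]$ and $R_{\CoKl}[g]$ inside $\CoKl(A\times -)$ itself. Applying the $\cC$-chain rule to this composite yields a reverse derivative valued in $A\times X$ whose $A$-component is an additive sum of two terms, one coming from differentiating $f$ in its $A$-input and one from differentiating $g$ in its $A$-input; these are precisely the two wires fed by $\Delta_A$. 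The post-composition with $\pi_X$ simultaneously annihilates both $A$-contributions, and what remains is exactly the $\CoKl(A\times -)$-composite of the two partial reverse derivatives $R_{\CoKl}[f]$ and $R_{\CoKl}[g]$, because the composition rule in $\CoKl(A\times -)$ itself copies the same global $A$ into both factors --- mirroring, one level down, the copying that $\Delta_A$ performed in the forward direction. A short string-diagram calculation using naturality of $\Delta_A$ and $\pi_X$, additivity of $R_\cC$ in its cotangent argument, and the strict monoidality of $\iota$ closes this step.

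Finally, the two higher-order axioms concerning second partial reverse derivatives follow by iterating the same pattern: a partial reverse derivative of a partial reverse derivative is once again the projection $\pi_X$ of a full $\cC$-reverse-derivative of a composite involving $\Delta_A$, so the corresponding identities in $\cC$ lift under $\pi_X$ with no new content. The coherence obligations thereby reduce to routine bookkeeping, and the theorem follows.
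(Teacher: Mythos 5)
Your proposal is correct in outline, but it takes a genuinely different route from the paper. You verify the reverse-derivative axioms directly for the combinator $R_{\CoKl(A \times -)}[f] = R_{\cC}[f] \comp \pi_X$, reducing each axiom to its counterpart in $\cC$ by unfolding the CoKleisli structure; in particular your analysis of the chain rule is sound: applying the $\cC$-chain rule to the underlying composite $(\Delta_A \times 1_X) \comp (1_A \times f) \comp g$ places the two $A$-contributions (from differentiating $f$ and $g$ in their $A$-inputs) in the $A$-component of the resulting derivative, which $\pi_X$ discards, leaving precisely the CoKleisli chain-rule expression built from the two partial reverse derivatives evaluated at the same copied $A$. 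The paper instead argues indirectly: it invokes the equivalence between reverse derivative categories and forward (cartesian differential) categories equipped with a contextual linear dagger \cite[Theorem 42]{ReverseDerivativeCategories}, applies the known result that the simple slice category --- which is exactly $\CoKl(A \times -)$ --- of a cartesian differential category is again cartesian differential \cite[Proposition 8]{ReverseDerivativeCategories}, and then observes that the dagger lifts because the linear fibration of a cartesian differential category carries a dagger structure. The trade-off is clear: the paper's route is short and leans entirely on existing machinery, but the explicit formula $R_{\cC}[f] \comp \pi_X$ appears only implicitly through the dagger translation, whereas your route is elementary and self-contained and exhibits the combinator directly, at the cost of seven tedious verifications. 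One caveat on your side: the higher-order axioms are the most compressed part of your sketch and are not quite ``no new content.'' There you must compute $R_{\CoKl(A \times -)}[R_{\CoKl(A \times -)}[f]]$, and since $\pi_X$ is linear with $R_{\cC}[\pi_X] = \pi_1 \comp \iota_X$ where $\iota_X = \langle 0, 1_X \rangle : X \to A \times X$, the chain rule gives $R_{\cC}[h \comp \pi_X] = \langle \pi_0, \pi_1 \comp \iota_X \rangle \comp R_{\cC}[h]$; hence the second CoKleisli derivative is $R_{\cC}[R_{\cC}[f]]$ conjugated by linear injections and projections, and reducing the linearity and symmetry-of-mixed-partials axioms to those of $\cC$ through this conjugation is routine but genuine work that a complete write-up would have to spell out.
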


\begin{proof}
  Appendix.
\end{proof}

In the framework of \cite{GradientBasedLearning}, a category $\cC$ being a
reverse-derivative is interpreted as a product-preserving functor $\cC \to
\Lens_A(\cC)$, where $\Lens_A(\cC)$ is the category of bimorphic lenses where
the backwards map is additive in the 2nd component. We refer the reader to
\cite{GradientBasedLearning} for more details.

Then, theorem \ref{prop:cokl_is_rdc} can be interpreted as a lifting: any
functor which takes a a category $\cC$ and augments it with the backward pass

\[
  R : \cC \to \Lens_A(\cC)
\]

can be lifted to an functor which takes the CoKleisli category of the
product comonad defined on $\cC$ and augments \emph{that} category with its backward
pass:

\[
  R_{\CoKl(A \times -)} : \CoKl(A \times -) \to \Lens_A(\CoKl(A \times -))
\]

But that is not all, as the category of interest for us isn't $\CoKl(A \times
-)$, but rather $\Para(\CoKl(A \times -))$.
This brings us to the third theorem of this paper, which shows that \textbf{we
  can compositionally backpropagate through Graph Convolutional Neural
  Networks}.

\begin{theorem}\label{thm:para(r_cokl)}
  There is an injective-on-objects 2-functor
  \begin{equation}
    \Para(R_{\CoKl(A \times -)}) : \Para(\CoKl(A \times -)) \to \Para(\Lens_A(\CoKl(A \times -)))
  \end{equation}
  which augments a parametric cokleisli morphism with its reverse derivative.
\end{theorem}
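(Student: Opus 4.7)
The plan is to invoke the 2-functoriality of the $\Para$ construction. It is well known (and forms the backbone of the approach in \cite{GradientBasedLearning}) that a strong monoidal functor $F : \cM \to \cN$ between monoidal categories induces a 2-functor $\Para(F) : \Para(\cM) \to \Para(\cN)$ that is identity on objects, sends a parametric morphism $(P, f : P \otimes X \to Y)$ to $(F(P), F(f) \comp \varphi^{-1})$ where $\varphi : F(P \otimes X) \cong F(P) \otimes F(X)$ is the strong monoidal structure morphism, and acts on a reparameterisation 2-cell $r : P \to P'$ by $F(r)$. In particular, if $F$ is identity-on-objects (as reverse derivative functors always are), then $\Para(F)$ is identity-on-objects as well, and hence injective on objects.

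Applied to the present setting, the functor to lift is $R_{\CoKl(A \times -)} : \CoKl(A \times -) \to \Lens_A(\CoKl(A \times -))$ produced by Theorem \ref{prop:cokl_is_rdc}. By the standard characterisation of a reverse derivative category as a product-preserving identity-on-objects functor into $\Lens_A$, this functor is automatically identity-on-objects and product-preserving. Applying the generic $\Para$-lifting described above then yields the desired identity-on-objects (hence injective-on-objects) 2-functor. Unpacking it, a parametric cokleisli morphism $(P, f : P \times A \times X \to Y)$ is sent to $(P, R_{\CoKl(A \times -)}[f])$, i.e. the morphism is augmented with its reverse derivative taken with respect to its local parameter $P$ and input $X$, while $A$ is threaded through as global context.

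The main technical wrinkle is actegorical: the $\Para$ used throughout Section 3 is really $\Para_\odot$ for the non-contextual $\cC$-action on $\CoKl(A \times -)$, not the self-action of a plain monoidal category. The generic $\Para$-lifting in this stronger setting requires $R_{\CoKl(A \times -)}$ to be not merely strong monoidal but a morphism of $\cC$-actegories, meaning it must commute (up to coherent isomorphism) with the $\cC$-actions on both sides. This reduces to checking that $R_\cC : \cC \to \Lens_A(\cC)$ is compatible with the embedding $\iota$ of Lemma \ref{lemma:base_into_cokleisli}, which is immediate from the fact that $R_\cC$ preserves projections and identity-on-objects structure.

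Once this compatibility is secured, the 2-functor axioms -- preservation of identities, horizontal composition, and whiskering with reparameterisation 2-cells -- become routine verifications using the explicit composition formula of Eq. \ref{eq:para_cokl_composition} together with the functoriality and strong monoidality of $R_{\CoKl(A \times -)}$. I expect no surprises in these coherence checks; they mirror the analogous checks for $\Para(R) : \Para(\cC) \to \Para(\Lens_A(\cC))$ already implicit in \cite{GradientBasedLearning}, simply applied one level up the CoKleisli lifting.
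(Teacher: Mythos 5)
Your proposal takes essentially the same route as the paper, whose entire proof is a one-line appeal to the generic $\Para$-lifting of a functor (Remark 2.1 of \cite{GradientBasedLearning}) applied to $R_{\CoKl(A \times -)}$; your additional attention to the actegorical subtlety of $\Para_\odot$ is a reasonable elaboration the paper leaves implicit. One small correction: $R_{\CoKl(A \times -)}$ is \emph{not} identity-on-objects --- as the paper itself unpacks, it sends $X$ to the pair $(X, X)$ in $\Lens_A(\CoKl(A \times -))$ --- so the induced 2-functor is injective-on-objects rather than identity-on-objects, which is exactly (and only) what the theorem claims.
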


\begin{proof}
Follows from applying Remark 2.1 in \cite{GradientBasedLearning} to $R_{\CoKl(A
  \times -)}$.
\end{proof}

We proceed to unpack the action of this relatively involved 2-functor, and show it
correctly models differentiation in this setting of GCNNs.

On objects, this functor maps $X$ to $(X, X)$. A morphism $(P, f : P \times A
\times X \to Y)$ -- intepreted as a locally $P$-parameterised and globally
$A$-parameterised map -- gets mapped to a pair whose first element is $(P, P)$
and the second element is a morphism $(P \times X, P \times X) \to (Y, Y)$ in
$\Lens_A(\CoKl(A \times -))$.
This means that this is a lens whose forward and backward part are both locally
$P$-parameterised, and globally $A$-parameterised.

This lens is defined by morphisms $f :
\CoKl(A \times -)(P \times X, Y)$ and $(R[f] \comp \pi_{P \times X} : \CoKl(A \times -)(P \times X \times Y, P \times X))$.
We note that the local parameter $P$ has appeared in the codomain of the backward map, which is not the case for the global parameter $A$.
This tells us that we are interested in computing the gradient with respect to $P$, but are not interested in computing the gradient with respect to $A$.
As mentioned in the beginning of subsection \ref{subsec:cokl_rdc}, we can think of this as taking the partial derivative of $f$ with respect to $P$ and $X$, but not $A$.

Unpacking the types completely, we see that the forward map is the map $f : A \times P \times X \to Y$ in $\cC$, and that the backward map is $R[f] \comp \pi_{P \times X} : A \times P \times X \times Y \to P \times X$ in $\cC$.
This means that our forward map $f$ was unchanged, and that the backward map computes the reverse derivative of $f$, and then only uses the computed derivative for $P$ and $X$.

This now allows us to finally show how a concrete GCNN can be backpropagated
through -- and this is by composing the functors in Theorems \ref{thm:two_functor} and
\ref{thm:para(r_cokl)}:

\[\begin{tikzcd}
    {\GCNN_n} \\
    \pc \\
    \plc
    \arrow["{\kappa_n}", from=1-1, to=2-1]
    \arrow["{\Para(R_{\CoKl(\R^{n \times n} \times - )})}", from=2-1, to=3-1]
  \end{tikzcd}\]

Unpacking the construction fully, we see that each object $\R^{n \times k}$ in
$\GCNN_n$ turns into a pair thereof: one describing values being computed,
the other one derivatives.
Each Graph Convolutional Neural Network turns into a lens which on the forward
pass computes the forward part of this GCNN - correctly broadcasting node
information to all other nodes.
The backward part of this lens then performs gradient descent on that forward
part, accumulating gradients in all the relevant parameters.

This enables us to simply plug in the rest of the framework described in
\cite{GradientBasedLearning} and add a loss function, optimisers, and finally
\emph{train} this network.

This concludes our formalization of the Graph Convolutional Neural Network
in Category theory through the CoKleisli construction. What follows are sketches
towards compelling avenues of research in generalising this construction
to comprehend the theory of neural architecture in greater detail and
to find disciplined approaches for the selection of an architecture given
specific priors.

\subsubsection{Aside: Towards the Equivalence for $\textbf{GCNN}_n$ and the
  parametrised lenses construction}

The title of this paper states that all graph convolutional neural networks are
parametric cokleisli morphisms, but interestingly \emph{not all} parametric cokleisli morphisms are graph convolutional neural networks.
Mathematically, this can be stated as the failure of $\kappa_n$ to be a full
2-functor. The 2-functor $\kappa_n$ is faithful and identity-on-objects, but it does not give us an equivalence of categories $\GCNN_n$ and $\plc$.

This can be tracked down to the base category $\Smooth$, and is an issue appearing in the framework of \cite{GradientBasedLearning}. The kinds of morphisms we consider to be neural networks are usually not just arbitrary smooth maps, but rather those formed by sequences of linear functions and activation functions (i.e. non-linear functions of a particular form).

On the other hand, using $\Vect$ as a base category would limit us to parametrised linear functions as morphisms. As we wish our construction to be faithful to the architectures deployed in the AI literature \cite{kipf2016semi} this inability to explicitly account for the nonlinear activation function presents an obstacle to this.
Indeed, the application of non-linear activations functions does not commute with composition of linear functions $\sigma(X \cdot X') \neq \sigma(X) \cdot\sigma(X')$.

This is truly inherent to universal approximation, which require the activations functions to be generally non-polynomial in order for neural networks to be universal approximators \cite{cybenko1989approximation}.
However, piece-wise linear functions fit this definition and are broadly used in deep learning. Consider for instance the rectifier or Rectified Linear Unit (ReLU) activation, which is made of two linear components and is applied componentwise to a vector of preactivation $x \in \mathbb{R}^k$.

\begin{definition}
  The Rectified Linear Unit is defined as
  \begin{align*}
    \ReLU: \R \rightarrow \R \coloneqq x \mapsto \max(x, 0)
  \end{align*}
We often use the same notation $\ReLU$ for the pointwise application of this map
$\ReLU^{k} : \R^k \to \R^k$ to each component $x_i$ of $x : \R^k$
\end{definition}

Much of what follows can be extended to general piecewise linear functions, with the appropriate considerations, but in a first instance we only consider ReLU as a topical example, as well as a broadly used activation. When we examine the ReLU function, in particular, we find that it satisfies the following property:

\begin{lemma}
  For every $x \in \mathbb{R}^k$, there exists a vector $p \in \{0,1\}^k$ such
  that
  $$\diag(p) \cdot x = \ReLU(x)$$
  where $\diag(p) : \R^{k \times k}$ is a diagonal matrix with values of $p$ on the diagonal.
\end{lemma}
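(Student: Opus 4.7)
The plan is to construct the vector $p$ explicitly from $x$ and then verify the equality componentwise. Concretely, I would define $p \in \{0,1\}^k$ by setting
\[
p_i \coloneqq \begin{cases} 1 & \text{if } x_i > 0 \\ 0 & \text{if } x_i \leq 0 \end{cases}
\]
for each $i \in \{1, \dots, k\}$. This is a well-defined element of $\{0,1\}^k$ that depends on $x$ (crucially, the lemma only asserts existence of such a $p$ for each fixed $x$, not any global continuous dependence, so this piecewise definition is admissible).

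Next I would compute $\diag(p) \cdot x$ componentwise. Since $\diag(p)$ is diagonal, the $i$-th component of the product is simply $p_i \cdot x_i$. I would then split into the two cases matching the definition of $p_i$: if $x_i > 0$, then $p_i x_i = 1 \cdot x_i = x_i = \max(x_i, 0) = \ReLU(x_i)$; if $x_i \leq 0$, then $p_i x_i = 0 \cdot x_i = 0 = \max(x_i, 0) = \ReLU(x_i)$. Hence the two vectors agree in every coordinate.

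There is essentially no obstacle here; the statement is a direct consequence of the fact that $\ReLU$ acts as the identity on the positive half-line and as zero on the nonpositive half-line, which is precisely the action of multiplication by a $\{0,1\}$-scalar chosen according to the sign of the input. The only subtle point worth flagging is the behaviour at $x_i = 0$, where either choice $p_i \in \{0,1\}$ would work; my convention above puts the boundary case into $p_i = 0$, but nothing in the subsequent development of the paper should depend on this choice.
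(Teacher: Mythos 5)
Your proof is correct and follows essentially the same approach as the paper, which likewise defines $p$ as the indicator of the (non-)negative components of $x$ and notes that multiplication by $\diag(p)$ kills exactly the coordinates that $\ReLU$ sends to zero; you merely make the componentwise verification explicit. The only difference is the irrelevant boundary convention (the paper assigns $p_i = 1$ when $x_i = 0$, you assign $p_i = 0$), and as you note, either choice works since $0 \cdot 0 = 1 \cdot 0 = 0$.
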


It is easy to see why this is true: the vector $p$ is describing which component of $x$ is greater than or equal to $0$. Applying $\diag(p) \cdot - $ to $x$ then provides the equivalent of killing the components that would be sent to zero in $\ReLU$.

\[\begin{tikzcd}
  {\mathbb{R}^n} \\
  \\
  \\
  {\mathbb{R}^m} \\
  \\
  {\mathbb{R}^m}
  \arrow["{-^T \cdot W}"{description}, color={rgb,255:red,204;green,51;blue,51}, from=1-1, to=4-1]
  \arrow["{\text{ReLU}(-)}"{description}, from=4-1, to=6-1]
  \arrow["{NN_{layer}(-)}"', curve={height=30pt}, from=1-1, to=6-1]
  \arrow["{diag(P)\cdot -}"{description}, shift left=3, color={rgb,255:red,204;green,51;blue,51}, curve={height=-30pt}, from=4-1, to=6-1]
  \arrow["{NN_{layer}(-)}"{description}, shift left=4, color={rgb,255:red,205;green,55;blue,55}, curve={height=-30pt}, from=1-1, to=6-1]
\end{tikzcd}\]
In the diagram above we show how every ReLU function in $\textbf{Smooth}$ can be represented locally as a linear function. Indeed, the red morphisms are linear functions and this is the key result of this observation: for ReLU activations there always exists a linear function that replicates exactly the local behaviour of $ReLU$ at $x$.

In summary, we postulate that the appropriate setting to do deep learning is a category more expressive than $\Vect$, but not quite as rich as $\textbf{Smooth}$.

	\section{Directions Beyond Graph Convolutions}
In \cite{bronstein2021geometric}, the authors suggest that Category Theory may play a unifying role in Geometric Deep Learning (GDL). Indeed there are two ways in which Category Theory may be able to assist in such efforts of unveiling a theory of architecture for neural networks: first, many artefacts in the engineering of deep learning systems are applied on an experimental basis. Here Category theory may provide a principled approach to selecting layers or hyperparameters (such as dropout, batch normalization, skip connections etc.) that may discipline the current practices in deep learning, in a similar vein to how functional programming has disciplined the design space of programming languages.

Secondly, the exploration of neural architectures resulting from inductive biases, such as observations of the data domain (the datapoint lives on a node of a graph, the network should be equivariant to the symmetries of a Lie group etc.) often inhibit the generalisation of neural architectures to general constructions. The equivariance literature looks at how to design networks that are indifferent to certain group transformation \cite{keriven2019universal}; meanwhile, networks that are increasingly expressive \cite{bodnar2021weisfeiler} or that are tailored around specific geometric meshes \cite{de2020gauge}. While these accomplishments are remarkable, a desirable next step is to abstract the theory of architecture in a domain-agnostic way. Such a theory can specify a network given an inductive bias, minimising the number of decision an AI-engineer must make, while streamlining the efforts of deep learning research.

A contribution by \cite{bronstein2021geometric} was to offer a first step in this direction, describing a general GDL architecture as a composition of layers in specific order:

\[
f = O \circ E_l \circ \sigma_l \circ P_l \circ ... \circ E_1 \circ \sigma_1 \circ P_1
\]

where $\{P_i\}_{i\in [l]}$ represent pooling layers, $\{E_i\}_{i \in [l]}$ are locally equivariant layers and $\{\sigma_i\}_{i\in [l]}$ are activation functions. This framework is general enough to encompass the family of aforementioned architectures. Nevertheless, there are artefacts that are not clearly detailed in this framework (skip connections, dropout etc.) and attempting to include them would be very involved. We conjecture that the strain would be inferior were we to represent these models categorically.

We have seen in the earlier sections how GCNNs have two types of parametrization, a \textit{global} adjacency matrix and a weight \textit{matrix}. Can we reconcile these two decomposition of geometric-based neural networks?

Many architectural choices are due to the structure of the input. In fact, as an inductive bias, the composition of layers is a modelling decision that is derived from the observation of certain properties in the data. The most immediate patterns arise in the genre of data: time series, images, text or numerical tables - each of these prompting an explicit design choice - however, there can be more subtle patterns that emerge in how we encode the features (categorical, ordinal encoding etc.).

In particular, viewing the CoKleisli category construction as a \emph{top-down} approach, in the sense that we dissect the architecture of a GCNN to it's main components, we now take the \emph{bottom-up} view of constructing an architecture based on the known restrictions of the problem domain. Geometric Deep Learning (GDL) \cite{bronstein2021geometric} is the discipline that explores the relationship between problem symmetries and architecture. For example, in a picture we may request that the network be invariant to translational and rotational symmetries: this fact can be encoded through the structure of a group, discretised on a grid, that allows us to ensure that all inputs in a given mode of a tensor are treated similarly. This is the case for Convolutional Neural Networks (more on the subject in \cite{gu2018recent}), which may be used to find classifiers for objects in a picture (such as the MNIST digit classification).

Overall, the role of batch processing has been hinted to in the previous sections: let us elucidate how these design choices affect architecture.
A way to represent neural networks on a Graph was to consider the functor:

\[
F: \mathbb{B}\text{Aut}(G) \rightarrow \textbf{Smooth},
\]

the classifying space of the automorphism group of the graph $G$, viewed as a category, into the category of euclidean spaces and smooth functions between them.\footnote{We thank Matteo Capucci for this observation.}
This functor maps every group object to the space $\mathbb{R}^n$, where $n = |V|$ the size of the vertex set of the graph; every group transformation, viewed as an automorphism in the object, is then realised as a permutation of $k$ bases.
This is to be understood as the family of functions which are equivariant with respect to $\text{Aut}(G)$.
In essence, we encode a desiderata into the functional class. Can we infer, from this prior, that the architecture we want will be built over $\CoKl(A \times -)$?

To sketch our initial considerations in addressing this problem, we present a sequence of functors that allow us to assign an architecture to the graph $G: \textbf{Span}(\textbf{FinSet})$. Starting with a pseudofunctor from $\textbf{Span}(\textbf{FinSet}) \rightarrow \textbf{Cat}$ that assigns to every graph its automorphism group as the category $\mathbb{B}\text{Aut}(G)$. A functor $R : \mathbb{B}\text{Aut}(G)\rightarrow \Vect$ is a representation over the group of automorphisms of $G$; the category of linear representations $[\mathbb{B}\text{Aut}(G), \Vect]$ is then enclosing all such assignments. A morphism in this category is a linear transformation $A: \mathbb{R}^k \rightarrow \mathbb{R}^k$ on the only image of $\mathbb{B}\text{Aut}(G)$: yielding a $k\times k $-matrix. Note that said collection of matrices need to compose like the morphisms in $\mathbb{B}\text{Aut}(G)$, which leaves us few options.

We can also speculate on how to generalise this procedure: we have generally assumed that the input to a vanilla neural network could be generally represented through a vector in $\R^n$. Ultimately that's correct, but only after some of the categorical inputs have been transformed to vectors, through some encoding procedure such as one-hot encoding or ordinal encoding, depending on the properties of the data. Notwithstanding the true nature of the data, it is important that the inputs are then transformed to vectors so that backpropagation may apply to the network.

However, this observation can allow us to abstract the learning problem, from a rich structure built over $\Smooth$, to something living in a higher category, perhaps $\textbf{Cat}$ itself. For instance, a collection of features may be a one hot encoding of a set, or an ordered set $X$, viewed as a category. Consider the functor into $\Smooth$, that applies the encoding to every object and morphism of $X$.

The above observations apply for a single feature, such as the city or borough where a house is (taken from a set of strings), or the floor where the apartment is located in a building (an ordered set of integers). Let us take a simple example in which data points are composed of a product $G \times \mathbb{R}^n$, the latter viewed as a set. $G$ in this case is a categorical feature. This could be a string, or an ordered set, or perhaps the names of a nodes on a graph. We may decide to encode this in the network, through a mapping $G\rightarrow \mathbb{R}^l$ for the appropriate $l\in \mathbb{Z}$, or we could build a network on $[\mathbb{B}\text{Aut}(G),\textbf{Smooth}]$, wherever this can be built. In the example of graph neural networks, we may either decide to encode the position on a graph on a categorical variable that is given as a feature to the network or build a neural network on each of the vertices of the graph, as GCNs can be understood.

As a final note, we want to suggest that we should define a category where the input data points live, which we dub a \textbf{batch category}. From there, the trick to completing the relationship between the CoKleisli construction and the functor category as characterisation of architecture may become clearer. In summary, here we outlined a research agenda aiming at the specification of the inductive bias through the properties of the input space. This in turn helps us navigate architecture space with a stronger conviction that the architectures are better suited for the task at hand, an overarching goal of Geometric Deep Learning.





	\section{Conclusions}

We have defined the bicategory of Graph Convolutional Neural Networks, and shown
how it can be factored through the already existing categorical constructions
for deep leanring: $\Para$ and $\Lens$.
We've shown that there is an injective-on-objects, faithful functor $\GCNN_n \to
\pc$, and through this construction gained an insight on the local/global
aspect of parameters in GCNNs.
In addition to a local parameter for each layer in a GCNN, we've shown a unique
characteristic of GCNNs as neural networks whose each layer has access to a
global parameter in addition to a local one.
This describes part of the inductive bias of GCNNs in a purely categorical
language, in terms of the CoKleisli category of the product comonad.

There is much more to do. We outline some key areas for development:
\begin{itemize}
\item GCNNs are neural networks with a very particular form of message passing.
  The message passing aspect described in \cite[Eq. (1)]{GNNDynProg} suggests a
  much more sophisticated form, as it closely matches a morphism in the category
  of dependent lenses. Are general graph neural networks morphisms in $\mathbf{DLens}(\Lens_A(\cC))$?
\item The framework of $\Para(\Lens(\cC))$ is closely related to the framework
  used for game-theoretic modelling of economic agents. Does studying games on
  graphs \cite{NetworkGames} tell us anything about graph neural networks, or
  vice versa?
\item Can the categorical construction in this paper tell us anything about
  updating the adjacency matrix too? As $\CoKl(A \times -)$ is too a
  Grothendieck construction of a particular graded monad, can we apply the same
  tool to lift a reverse derivative functor, but this time in a way which
  computes the gradient too?
\item Is there a particular base category which we can instantiate the
  $\Para(\Lens)$ framework on such that (graph) neural networks are not merely a
  subcategory of, but equivalent to?
\end{itemize}


%
%
%
%
%

	\bibliographystyle{alpha}
	\bibliography{bibliography}

  \appendix
  \section{Appendix}

\CoKleisliIsRDC*
\begin{proof}
  Recall that a reverse derivative category is equivalent to a forward derivative category with a contextual linear dagger (\cite[Theorem 42.]{ReverseDerivativeCategories}).
  This allows us to restate both the starting data and the end goal of the proof using forward derivative categories, for which we can use existing machinery.
  To prove $\CoKl(A \times -)$ is a forward derivative category given that the base $\cC$ is forward derivative, we use \cite[Proposition 8.]{ReverseDerivativeCategories}\footnote{The cited resource calls $\CoKl(A \times -)$ a \emph{simple slice category}, inspired by the idea that there's a faithful embedding $\CoKl(A \times -) \hookrightarrow \cC/A$.}.
  What remains is to prove that $\CoKl(A \times -)$ has a contextual linear dagger (on the forward derivative structure given above) given that the base does (on the forward derivative structure assumed above). The ``contextual'' here refers to the fact that the existing linear fibration has a dagger structure. This follows in a straightforward way since the linear fibration associated to a cartesian differential category does in fact have a dagger structure.
\end{proof}

\section{Categorical interaction between $\CoKl(A \times -)$ and $\Para$}

The fact that CoKleisli categories of the $A \times -$ comonad describe a kind
of parameterised processes where the parameter is always equivalent to $A$ can
make us ponder the question: is there a functor $\CoKl(A \times -) \to
\Para(\cC)$? The answer is yes.

\begin{proposition}[$\CoKl(A \times -)$ embeds into $\Para(\cC)$]
  Let $\cC$ be a cartesian category. Let $A : \cC$. Then there is an oplax,
  identity-on-objects functor $\tau : \CoKl(A \times -) \to \Para(\cC)$ which takes a
  morphism $f : \CoKl(A \times -)(X, Y)$ and maps it to the $A$-parameterised
  morphism $(A, f) : \Para(\cC)(X, Y)$.
\end{proposition}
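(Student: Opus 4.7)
The plan is to construct $\tau$ by giving its action on objects and 1-morphisms, defining the oplax comparison 2-cells for identity and composition, and then verifying the coherence axioms. On objects, $\tau$ is the identity. A morphism $f \in \CoKl(A \times -)(X, Y)$ is a $\cC$-morphism $A \times X \to Y$, and we simply take $\tau(f) = (A, f) : X \to Y$ in $\Para(\cC)$, making the parameter object $A$ explicit. Since $\CoKl(A \times -)$ is a $1$-category, we view it as a locally discrete bicategory and produce an oplax pseudofunctor to the bicategory $\Para(\cC)$.

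The functor cannot be strict, and this is the heart of the matter: the $\CoKl$ composite of $f : A \times X \to Y$ with $g : A \times Y \to Z$ threads a single copy of $A$ through both layers via $\Delta_A$, whereas the $\Para$ composite of $(A, f)$ with $(A, g)$ has parameter $A \times A$ and treats the two copies independently. I would define the oplax comparison cells directly from the comonad structure of $A \times -$. For composition, the 2-cell $\tau(g \comp f) \Rightarrow \tau(g) \comp \tau(f)$ is the reparameterisation $\Delta_A : A \to A \times A$; its defining commutativity condition unpacks exactly to the definition of $\CoKl$-composition, namely $h \comp (\Delta_A \times X) = g \comp_{\CoKl} f$, where $h : A \times A \times X \to Z$ is the underlying $\Para$ composite. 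For the unit, $\tau(\id_X) = (A, \pi_X)$ while $\id_{\tau X} = (1, \lambda_X)$; the comparison $\tau(\id_X) \Rightarrow \id_{\tau X}$ is the unique map $!_A : A \to 1$ to the terminal object, and the defining condition reduces to the fact that $\lambda_X \comp (!_A \times X) = \pi_X$, which holds in any cartesian category.

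What remains is the oplax coherence: the associativity hexagon relating triple composites, and the two unitor triangles. Each reduces to an equation between composites of $\Delta_A$ and $!_A$ with identities and projections, which are precisely the comonad laws for $A \times -$ (coassociativity and counitality), together with naturality of the cartesian structure. I expect this to be routine, if slightly tedious. The only real obstacle is bookkeeping: keeping straight the direction of the 2-cells under the paper's convention, and recognising that $\Delta_A$ and $!_A$ are forced on us by the need to reconcile the single-$A$ parameterisation inherent to $\CoKl$ with the multi-$A$ parameterisation that $\Para$ produces upon composition. Identity-on-objects is built in by construction.
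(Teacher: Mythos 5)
Your proposal is correct and takes essentially the same route as the paper: both define $\tau$ as identity-on-objects with $\tau(f) = (A, f)$, take the composition comparison cell to be the reparameterisation $\Delta_A : A \to A \times A$ (whose defining condition unpacks to the $\CoKl$ composition rule), take the unit comparison to be $!_A : A \to 1$ against the identity $(1, \lambda_X)$, and leave the coherence axioms as routine. Your added observation that coherence reduces to the coassociativity and counitality laws of the $A \times -$ comonad is a mild elaboration the paper omits, but it does not constitute a different argument.
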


\begin{proof}
  We unpack the definition of the unit and composition 2-cells, and omit the
  proof of their coherence, which is routine.
Given two morphisms $f : \CoKl(A \times -)(X, Y)$,  and $g : \CoKl(A \times
-)(Y, Z)$, there are two ways to end up with a morphism in $\Para(\cC)(X, Z)$.

\[\begin{tikzcd}[column sep=2.5ex]
	& {} \\
	{\CoKl(A \times -)(X, Y) \times \CoKl(A \times -)(Y, Z)} && {\Para(\cC-)(X, Y) \times \Para(\cC)(Y, Z)} \\
	\\
	{\CoKl(A \times -)(X, Z)} & {} & {\Para(\cC)(X, Z)}
	\arrow["\comp"', from=2-1, to=4-1]
	\arrow["\tau"', from=4-1, to=4-3]
	\arrow["{\tau \times \tau}", from=2-1, to=2-3]
	\arrow["\comp", from=2-3, to=4-3]
	\arrow[shorten <=29pt, shorten >=44pt, Rightarrow, from=2-3, to=4-1]
	\arrow["{\delta_{X, Z}}"{description, pos=0.4}, draw=none, from=4-2, to=1-2]
\end{tikzcd}\]

First applying $\tau$ to both $f$ and $g$ and then composing them as parametric
morphisms yields a morphism $(A \times A, (A \times \tau(f)) \comp \tau(g))$.
On the other hand, if we first compose them, and then apply $\tau$ we obtain a
morphism $(A, (\Delta_A \times X \comp f) \comp g)$.
These are connected by a reparameterisation $\tau(f) \comp \tau(g) \Rightarrow
\tau(f \comp g)$, i.e. a copy morphism $\Delta_A : A \to A \times A$.

Likewise, this functor preserves identities also only up to a 2-cell. Starting
with an object $X : \CoKl(A \times -)$, there are two ways to obtain a morphism
in $\Para(\cC)(X, X)$.
We can either take the identity in $\pi_X : \CoKl(A \times -)$ and then apply
$\tau$ to it, or we can look at $\tau(X)$ (which is equal to $X$) and look at
its identity in $\Para(\cC)$, which unpacks to $(1, \lambda_X)$.
These morphisms are too connected by the terminal reparameterisation $! : A \to 1$.
\end{proof}

\end{document}